\newtheorem{theorem}{Theorem}
\newtheorem{acknowledgement}[theorem]{Acknowledgement}
\newtheorem{corollary}[theorem]{Corollary}
\newtheorem{lemma}[theorem]{Lemma}
\begin{document}

\title[The Solvability of Inverse Problem]
{Inverse Scattering Problem for a Piecewise Continuous Sturm - Liouville Equation with Eigenparameter Dependence in the Boundary Conditon}
\author[Kh. R. Mamedov, Nida P. Kosar and F. Ayca Cetinkaya]{Kh. R. Mamedov$^{1}$, Nida P. Kosar$^{2}$ and F. Ayca Cetinkaya$^{3,*}$} 
\address{$^{1,3}$Science and Letters Faculty, Mathematics Department,
Mersin University, 33343, Turkey}
\address{$^{2}$
 Department of Elementary Mathematics Education, Gaziantep University, 27310, Gaziantep, Turkey}
\thanks{$^{*}$corresponding author}
\email{$^{1}$hanlar@mersin.edu.tr, $^{2}$npkosar@gantep.edu.tr, $^{3}$ faycacetinkaya@mersin.edu.tr}
\keywords{Sturm-Liouville operator, inverse scattering problem, main equation}
\subjclass[2010]{34L05; 34L20; 34L25}
\date{}
\maketitle
\begin{abstract}
In this study, the inverse problem of the scattering theory on the half line for a piecewise continuous Sturm-Liouville equation with boundary condition depending quadratic on the spectral parameter is considered. The scattering data of the problem is defined, some properties of the
scattering data are investigated. The main equation is derived and uniqueness algorithm to the potential with given scattering data is studied.
\end{abstract}

\section{Introduction}

\quad In this paper, we consider the differential equation 
\begin{equation} \label{1.1}
-y''+q(x)y=\lambda^{2}\rho(x)y \quad 0\leq x<+\infty
\end{equation}
with the boundary condition
\begin{equation} \label{1.2}
\left(\beta_{0}+i\beta_{1}\lambda+\beta_{2}\lambda^{2}\right)y'(0)+\left(\alpha_{0}+i\alpha_{1}\lambda+\alpha_{2}\lambda^{2}\right)y(0)=0
\end{equation}
where $\lambda$ is a spectral parameter, $q(x)$ is a real valued function satisfying the condition 
\[
\int^{+\infty}_{0} (1+x)\left|q(x)\right|dx < + \infty,
\]
$\rho(x)$ is a positive piecewise continuous function
\[
\rho(x)=\left\{
\begin{array}{rl}
\alpha^{2}, & 0\leq x < a,\\
1, & x >a,
\end{array} \right.
\]
as $0< \alpha \neq 1$ and $\alpha_{i}, \beta_{i}$ $(i=0,1,2)$ are real numbers satisfying the conditions 
\begin{equation} \label{1.3}
\delta_{1}:=\alpha_{0}\beta_{1}-\alpha_{1}\beta_{0} \leq 0, \quad \delta_{2}:=\alpha_{0}\beta_{2}-\alpha_{2}\beta_{0} \leq 0, \quad \delta_{3}:=\alpha_{1}\beta_{2}-\alpha_{2}\beta_{1} \geq 0.
\end{equation}

The inverse scattering problem for (\ref{1.1}) is completely solved in \cite{Marchenko} with the boundary condition $y(0)=0$ and in \cite{Levitan,Sargsjan} with $y'(0)-hy(0)=0$ where $h$ is an arbitrary real number. Inverse problem of spectral analysis which has a spectral parameter in the boundary condition is studied in \cite{Fedotova} as regards to spectral function. In \cite{Guseinov} a boundary value problem which consists a second order differential equation with a discontinuous coefficient is studied on the half line. When the coefficient has discontinuity at the point $a$, the solution of the inverse scattering problem was examined as the solution of two inverse problems in the intervals $[0,a]$ and $[a, +\infty)$ in \cite{Gasymov,Darwish}. This discontinuity affects the structure of the representation of the Jost solution and the main equation of the inverse problem. In \cite{Mamedov3,Mamedov1,Mamedov2,Manafov} similar problem is examined as regards to scattering data . This type of boundary conditions arise from applied problems such as the study of heat condition by \cite{Cohen}. In \cite{Huseynov} the inverse scattering problem is dealt on the whole axis. \\

The paper is organized as follows. In Section 2, the scattering data for the boundary value problem (\ref{1.1}), (\ref{1.2}) is defined and properties of the scattering data are examined. In Section 3, the main equation for the inverse problem is constructed and the uniqueness of solution of the inverse problem is given.\\

The function
\[
e_{0}(x,\lambda)=\frac{1}{2}\left(1+\frac{1}{\alpha}\right)e^{i \lambda \mu^{+}(x)}+\frac{1}{2}\left(1-\frac{1}{\alpha}\right)e^{i \lambda \mu^{-}(x)}
\]
is the Jost solution of (\ref{1.1}) when $q(x)\equiv 0$, where $\mu^{\pm}(x)=\pm x\sqrt{\rho(x)}+a(1\mp \sqrt{\rho(x)})$.\\

It is well known from (see \cite{Guseinov,Mamedov1}) that for all $\lambda$ from the closed upper half-plane (\ref{1.1}) has a unique solution $e(x, \lambda)$ which can be represented in the form 
\begin{equation} \label{1.4}
e(x, \lambda)=e_{0}(x, \lambda)+\int_{\mu^{+}(x)}^{+\infty}K(x,t) e^{i \lambda t} dt,
\end{equation} 
where the function $K(x,t) \in L_{1}\left(\mu^{+}(x), +\infty\right)$ satisfies the properties below:
\begin{equation} \label{1.5}
\frac{d}{dx}K(x,\mu^{+}(x))=\frac{1}{4\sqrt{\rho(x)}}\left(1+\frac{1}{\sqrt{\rho(x)}}\right)q(x),
\end{equation}
\begin{equation} \label{1.6}
\frac{d}{dx}\left[K(x,\mu^{-}(x)+0)-K(x,\mu^{-}(x)-0)\right]=\frac{1}{4\sqrt{\rho(x)}}\left(1-\frac{1}{\sqrt{\rho(x)}}\right)q(x),
\end{equation}
if $q(x)$ is differentiable, the kernel $K(x,t)$ satisfies (a.e) the equation
\[
\rho(x)K''_{tt}-K''_{xx}+q(x)K=0, \quad 0 \leq x<+\infty, \quad t>\mu^{+}(x).
\]

Since the function $q(x)$ are real valued and the numbers $\alpha_{i}, \beta_{i}$ $(i=0,1,2)$ are real, the function $\overline{e(x,\lambda)}$ is also a solution of the boundary value problem (\ref{1.1}), (\ref{1.2}) and together with $e(x,\lambda)$ it forms a system of fundamental solutions for real $\lambda \neq 0$, their Wronskian does not depend on $x$ and
\begin{equation} \label{1.7}
W\left\{e(x,\lambda), \overline{e(x, \lambda)}\right\}= e'(x,\lambda)\overline{e(x,\lambda)}-e(x,\lambda)\overline{e'(x,\lambda)}=2i\lambda
\end{equation}
holds.

\section{Scattering Data}

\quad Let us assume that $w(x,\lambda)$ be a solution of equation (\ref{1.1}) satisfying the initial conditions
\[
w(0, \lambda)=\beta_{0}+i\beta_{1}\lambda+\beta_{2}\lambda^{2}, \quad w'(0, \lambda)=-\left(\alpha_{0}+i\alpha_{1}\lambda+\alpha_{2}\lambda^{2}\right)
\]
then the following assertion holds.
\begin{lemma}
For any real number $\lambda \neq 0$ 
\begin{equation} \label{2.1}
\frac{2i \lambda w(x,\lambda)}{E(\lambda)}=\overline{e(x,\lambda)}-S(\lambda)e(x,\lambda)
\end{equation}
is valid, where 
\[
E(\lambda):= \left(\beta_{0}+i\beta_{1}\lambda+\beta_{2}\lambda^{2}\right)e'(0,\lambda)+\left(\alpha_{0}+i\alpha_{1}\lambda+\alpha_{2}\lambda^{2}\right)e(0,\lambda)
\]
and 
\[
S(\lambda):=\frac{\left(\beta_{0}+i\beta_{1}\lambda+\beta_{2}\lambda^{2}\right)\overline{e'(0,\lambda)}+\left(\alpha_{0}+i\alpha_{1}\lambda+\alpha_{2}\lambda^{2}\right)\overline{e(0,\lambda)}}{\left(\beta_{0}+i\beta_{1}\lambda+\beta_{2}\lambda^{2}\right)e'(0,\lambda)+\left(\alpha_{0}+i\alpha_{1}\lambda+\alpha_{2}\lambda^{2}\right)e(0,\lambda)}.
\]

Moreover, the functions $E(\lambda)$ and $S(\lambda)$ have the properties below:
\[
\overline{E(\lambda)}=E(-\lambda), \quad \overline{S(-\lambda)}=S(\lambda).
\]
\end{lemma}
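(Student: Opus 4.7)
Since (\ref{1.7}) gives $W\{e(x,\lambda),\overline{e(x,\lambda)}\}=2i\lambda\neq 0$ for real $\lambda\neq 0$, the pair $\{e(x,\lambda),\overline{e(x,\lambda)}\}$ is a fundamental system for (\ref{1.1}), so I can expand
\[
w(x,\lambda)=A(\lambda)\,e(x,\lambda)+B(\lambda)\,\overline{e(x,\lambda)}
\]
with $\lambda$-dependent coefficients. The plan is to pin down $A(\lambda)$ and $B(\lambda)$ by evaluating the Wronskians $W\{w,e\}$ and $W\{w,\overline{e}\}$ at $x=0$, where the initial values of $w$ are explicit, then recognise that the resulting expressions are exactly $E(\lambda)$ and $S(\lambda)E(\lambda)$.

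Concretely, using the Wronskian convention from (\ref{1.7}) and the identity $W\{e,e\}=W\{\overline{e},\overline{e}\}=0$, I get $W\{w,e\}=-2i\lambda\,B(\lambda)$ and $W\{w,\overline{e}\}=2i\lambda\,A(\lambda)$. Substituting the initial conditions $w(0,\lambda)=\beta_0+i\beta_1\lambda+\beta_2\lambda^2$ and $w'(0,\lambda)=-(\alpha_0+i\alpha_1\lambda+\alpha_2\lambda^2)$ gives
\[
W\{w,e\}\big|_{x=0}=-E(\lambda),\qquad W\{w,\overline{e}\}\big|_{x=0}=-S(\lambda)E(\lambda),
\]
where the second equality is just the definition of $S(\lambda)$ multiplied by $E(\lambda)$. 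Solving for $A$ and $B$ yields $B(\lambda)=E(\lambda)/(2i\lambda)$ and $A(\lambda)=-S(\lambda)E(\lambda)/(2i\lambda)$; multiplying the expansion of $w$ through by $2i\lambda/E(\lambda)$ gives (\ref{2.1}).

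For the symmetry properties, my strategy is to exploit the reality of $q$, $\alpha_i$, $\beta_i$, together with (\ref{1.4}). Since the representation (\ref{1.4}) has real kernel $K(x,t)$ (this follows from the reality of the coefficients and uniqueness of $e(x,\lambda)$) and $\overline{e_0(x,\lambda)}=e_0(x,-\lambda)$, for real $\lambda$ one has $\overline{e(x,\lambda)}=e(x,-\lambda)$. Combined with $\overline{\alpha_0+i\alpha_1\lambda+\alpha_2\lambda^2}=\alpha_0-i\alpha_1\lambda+\alpha_2\lambda^2$ (and likewise for the $\beta$-polynomial), conjugating the defining formula for $E(\lambda)$ immediately yields $\overline{E(\lambda)}=E(-\lambda)$. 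The identity $\overline{S(-\lambda)}=S(\lambda)$ then follows by computing the conjugates of the numerator and denominator of $S(\lambda)$ separately and comparing with $S(-\lambda)$.

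The hard part is not computational but notational: one must carefully book-keep which Wronskian convention is being used (the paper's $W\{f,g\}=f'g-fg'$) and verify that the algebraic combinations that arise at $x=0$ match the given formulas for $E(\lambda)$ and $S(\lambda)$ on the nose. The only substantive input is the reality of $K(x,t)$, which is needed to justify $\overline{e(x,\lambda)}=e(x,-\lambda)$; everything else is routine Wronskian manipulation.
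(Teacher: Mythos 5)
Your derivation of \eqref{2.1} is correct and is in substance the same as the paper's: determining the coefficients $A(\lambda)$, $B(\lambda)$ by evaluating $W\{w,e\}$ and $W\{w,\overline{e}\}$ at $x=0$ is exactly Cramer's rule for the $2\times 2$ linear system that the paper writes down from the initial conditions, and your values $B(\lambda)=E(\lambda)/(2i\lambda)$, $A(\lambda)=-S(\lambda)E(\lambda)/(2i\lambda)$ agree with the paper's $c_{2}(\lambda)$, $c_{1}(\lambda)$. Your treatment of the symmetries $\overline{E(\lambda)}=E(-\lambda)$ and $\overline{S(-\lambda)}=S(\lambda)$ via the reality of $K(x,t)$ and $\overline{e(x,\lambda)}=e(x,-\lambda)$ is fine (the paper in fact offers no argument for these at all).

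There is, however, one genuine gap: you never show that $E(\lambda)\neq 0$ for real $\lambda\neq 0$. This is not a formality --- both the left-hand side of \eqref{2.1} and the very definition of $S(\lambda)$ divide by $E(\lambda)$, so without this fact the identity you are asked to prove is not even well posed, and dividing your expansion of $w$ by $E(\lambda)$ is unjustified. This is where the hypotheses \eqref{1.3} enter, and it is the only place in the lemma where they are used; a proof that never invokes $\delta_{1}\leq 0$, $\delta_{3}\geq 0$ should be a warning sign. The paper closes this gap by contradiction: if $E(\lambda_{0})=0$ for some real $\lambda_{0}\neq 0$, then $e'(0,\lambda_{0})=-\frac{\alpha_{0}+i\alpha_{1}\lambda_{0}+\alpha_{2}\lambda_{0}^{2}}{\beta_{0}+i\beta_{1}\lambda_{0}+\beta_{2}\lambda_{0}^{2}}\,e(0,\lambda_{0})$, and substituting this into the Wronskian identity \eqref{1.7} at $x=0$ yields
\[
\left|e(0,\lambda_{0})\right|^{2}\,\frac{\delta_{1}-\lambda_{0}^{2}\delta_{3}}{\left|\beta_{0}+i\beta_{1}\lambda_{0}+\beta_{2}\lambda_{0}^{2}\right|^{2}}=1,
\]
whose left-hand side is $\leq 0$ by \eqref{1.3} --- a contradiction. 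You should add this step (and, strictly speaking, also dispose of the degenerate case $\beta_{0}+i\beta_{1}\lambda_{0}+\beta_{2}\lambda_{0}^{2}=0$, which forces $e(0,\lambda_{0})=0$ as well and again contradicts \eqref{1.7}).
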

\begin{proof}
\ Since the functions $e(x,\lambda)$ and $\overline{e(x,\lambda)}$ are fundamental solutions of equation (\ref{1.1}) for all real $\lambda \neq 0$ we can write 
\[
w(x,\lambda)=c_{1}(\lambda)e(x,\lambda)+c_{2}(\lambda)\overline{e(x,\lambda)}.
\]
Now let us consider the following relations:
\[
c_{1}(\lambda)e(0,\lambda)+c_{2}(\lambda)\overline{e(0,\lambda)}=\beta_{0}+i\beta_{1}\lambda+\beta_{2}\lambda^{2},
\]
\[
c_{1}(\lambda)e'(0,\lambda)+c_{2}(\lambda)\overline{e'(0,\lambda)}=-\left(\alpha_{0}+i\alpha_{1}\lambda+\alpha_{2}\lambda^{2}\right).
\]
Hence, we have
\[
c_{1}(\lambda)=-\frac{1}{2i \lambda}\left[\left(\beta_{0}+i\beta_{1}\lambda+\beta_{2}\lambda^{2}\right)\overline{e'(0,\lambda)}+\left(\alpha_{0}+i\alpha_{1}\lambda+\alpha_{2}\lambda^{2}\right)\overline{e(0,\lambda)}\right], 
\]
\[
c_{2}(\lambda)=\frac{1}{2i \lambda}\left[\left(\beta_{0}+i\beta_{1}\lambda+\beta_{2}\lambda^{2}\right)e'(0,\lambda)+\left(\alpha_{0}+i\alpha_{1}\lambda+\alpha_{2}\lambda^{2}\right)e(0,\lambda)\right].
\]
Thus, the relation below is valid:
\[
w(x,\lambda)=\frac{-1}{2i\lambda}\left[\left(\beta_{0}+i \beta_{1} \lambda +\beta_{2}\lambda^{2}\right)\overline{e'(0,\lambda)}+\left(\alpha_{0}+i\alpha_{1}\lambda+\alpha_{2}\lambda^{2}\right)\overline{e(0,\lambda)}\right]e(x,\lambda)
\]
\begin{equation} \label{2.2}
+\frac{1}{2i\lambda}\left[\left(\beta_{0}+i\beta_{1}\lambda+\beta_{2}\lambda^{2}\right)e'(0,\lambda)+(\alpha_{0}+i \alpha_{1} \lambda+\alpha_{2}\lambda^{2})e(0,\lambda)\right]\overline{e(x,\lambda).}
\end{equation}
Now let us show that $E(\lambda)\neq 0$ holds for real $\lambda \neq 0$. \\

Assuming $E(\lambda_{0})=0$ as $0 \neq \lambda_{0} \in (- \infty, +\infty)$ we get
\begin{equation} \label{2.3}
e'(0, \lambda_{0})=-\frac{\left(\alpha_{0}+i\alpha_{1}\lambda_{0}+\alpha_{2}\lambda^{2}_{0}\right)}{\left(\beta_{0}+i\beta_{1}\lambda_{0}+\beta_{2}\lambda^{2}_{0}\right)}e(0, \lambda_{0}).
\end{equation} 
If we take into consideration (\ref{1.7}) and (\ref{2.3}) we have
\[
\left|e(0, \lambda_{0})\right|^{2}\left[\frac{\delta_{1}-\lambda_{0}^{2}\delta_{3}}{\left(\beta_{0}+i\beta_{1}\lambda_{0}+\beta_{2}\lambda_{0}^{2}\right)\left(\beta_{0}-i\beta_{1}\lambda_{0}+\beta_{2}\lambda_{0}^{2}\right)}\right]=1
\]
and due to the relations given in (\ref{1.3}), the last equation makes a contradiction to the assumption $E(\lambda_{0})=0$ for all $\lambda_{0}\neq 0$. The validity of (\ref{2.1}) can be easily seen from (\ref{2.2}). Hence, the lemma is proven. 
\end{proof}

The function $S(\lambda)$ which is identified in (\ref{2.1}) is called \textit{the scattering function} of the boundary value problem (\ref{1.1}), (\ref{1.2}). \\

The left hand side of (\ref{2.1}) is clearly a meromorphic function in the upper half plane $Im \lambda >0$ with poles at the zeros of the function $E(\lambda)$.
\begin{lemma}
The function $E(\lambda)$ may have only a finite number of zeros in the half plane $Im \lambda >0$.
\end{lemma}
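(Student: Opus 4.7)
The plan is to combine the analytic structure of $E(\lambda)$ in the upper half-plane, its non-vanishing on the real axis (from the previous Lemma), and an asymptotic estimate that keeps zeros away from infinity, then conclude by isolation of zeros.

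\textbf{Step 1 (holomorphy and continuity).} From the Jost representation (\ref{1.4}) at $x=0$,
\[
e(0,\lambda)=e_{0}(0,\lambda)+\int_{\mu^{+}(0)}^{+\infty}K(0,t)\,e^{i\lambda t}\,dt,
\]
and similarly for $e'(0,\lambda)$. For $\mathrm{Im}\,\lambda\ge 0$ the integrand is dominated in modulus by $|K(0,t)|\in L_{1}$, so the integral is uniformly bounded, holomorphic on $\mathrm{Im}\,\lambda>0$ (differentiate under the integral sign) and continuous on $\mathrm{Im}\,\lambda\ge 0$. The polynomial prefactors in $\lambda$ appearing in $E(\lambda)$ are entire, so $E$ is holomorphic on $\{\mathrm{Im}\,\lambda>0\}$ and continuous on its closure.

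\textbf{Step 2 (no real zeros away from the origin).} The preceding Lemma gives $E(\lambda)\ne 0$ for every real $\lambda\ne 0$. Combined with Step~1, $E$ is continuous and nowhere zero on $\mathbb{R}\setminus\{0\}$, hence bounded below in modulus on any compact subset of $\mathbb{R}\setminus\{0\}$.

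\textbf{Step 3 (localization at infinity).} Using the explicit expression
\[
e_{0}(0,\lambda)=\tfrac{\alpha+1}{2\alpha}\,e^{i\lambda a(1-\alpha)}+\tfrac{\alpha-1}{2\alpha}\,e^{i\lambda a(1+\alpha)},
\]
together with an analogous formula for $e_{0}'(0,\lambda)$ (obtained by differentiating across the interval $[0,a)$ where $\rho\equiv\alpha^{2}$) and a Riemann--Lebesgue-type decay of the integral remainder $\int K(0,t)e^{i\lambda t}dt\to 0$ as $|\mathrm{Re}\,\lambda|\to\infty$ uniformly on $\mathrm{Im}\,\lambda\ge 0$, I would extract the leading behaviour of $E(\lambda)$ for large $|\lambda|$. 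After a case-split on the direction of approach to infinity and on which of $\alpha_{2},\beta_{2}$ are nonzero, the $\lambda^{2}$ prefactors force a positive lower bound on $|E(\lambda)|$ that rules out zeros at arbitrarily large modulus in the closed upper half-plane.

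\textbf{Step 4 (conclusion).} Zeros of a holomorphic function are isolated, so $E$ has only finitely many zeros in any compact subset of $\{\mathrm{Im}\,\lambda>0\}$. By Steps~2 and 3 the zero set is bounded in $\{\mathrm{Im}\,\lambda\ge 0\}$ and cannot accumulate on $\mathbb{R}\setminus\{0\}$. A short analysis at $\lambda=0$, using the symmetry $\overline{E(\lambda)}=E(-\lambda)$ on $\mathbb{R}$ to conclude that any zero at the origin is of finite order, excludes accumulation there and yields finiteness in $\mathrm{Im}\,\lambda>0$.

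\textbf{Main obstacle.} The technical core is Step~3. The discontinuity of $\rho$ at $x=a$ produces two exponential scales $e^{i\lambda a(1\pm\alpha)}$ in $e_{0}(0,\lambda)$ with genuinely different behaviour in the upper half-plane (both decay when $\alpha<1$, one grows when $\alpha>1$). One must verify that no cancellation between these two scales nor any conspiracy with the quadratic prefactors $\alpha_{0}+i\alpha_{1}\lambda+\alpha_{2}\lambda^{2}$, $\beta_{0}+i\beta_{1}\lambda+\beta_{2}\lambda^{2}$ permits $|E(\lambda)|$ to vanish along a sequence tending to infinity; the integrability hypothesis $\int_{0}^{\infty}(1+x)|q(x)|dx<\infty$ is the ingredient that ultimately controls the remainder $K(0,\cdot)$ finely enough to make this possible.
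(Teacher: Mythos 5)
There is a genuine gap, and it sits exactly where your Step 4 waves its hands: the possible accumulation of zeros at $\lambda=0$. Under the hypothesis $\int_{0}^{\infty}(1+x)|q(x)|\,dx<\infty$ the kernel $K(0,\cdot)$ is only in $L_{1}$, so the integral term in $e(0,\lambda)=e_{0}(0,\lambda)+\int K(0,t)e^{i\lambda t}dt$ converges for $\mathrm{Im}\,\lambda\geq 0$ only; $E(\lambda)$ is holomorphic in the \emph{open} upper half-plane and merely continuous up to the real axis. It has no analytic continuation across $\lambda=0$, so the phrase ``any zero at the origin is of finite order'' is not meaningful, and the symmetry $\overline{E(\lambda)}=E(-\lambda)$ on the real line does not repair this. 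Consequently nothing in your argument excludes a sequence of zeros in $\mathrm{Im}\,\lambda>0$ converging to $0$: each zero is isolated, the set is bounded, yet it could still be infinite. Note also that your proof never invokes the sign conditions (\ref{1.3}), which should be a warning sign, since they are what make the statement work.

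The paper closes this hole in two steps that are absent from your proposal. First, by a Green's-identity computation (multiply the equations for $e(x,\lambda_{1})$ and $\overline{e(x,\lambda_{2})}$ crosswise, subtract, integrate over $[0,\infty)$, and use $E(\lambda_{j})=0$ to evaluate the boundary Wronskian), the conditions $\delta_{1}\leq 0$, $\delta_{2}\leq 0$, $\delta_{3}\geq 0$ force $\mathrm{Re}\,\lambda_{1}=0$ for every zero $\lambda_{1}$ in the upper half-plane: all zeros lie on the imaginary axis. Second, in the delicate case $E(0)=0$, finiteness is obtained not from local analyticity at $0$ but from a quantitative lower bound on the distance between neighboring imaginary zeros (the Marchenko estimate, \cite{Marchenko}, p.~186), which is where the weighted integrability of $q$ actually enters. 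Your Steps 1--3 (holomorphy, non-vanishing on $\mathbb{R}\setminus\{0\}$, boundedness of the zero set via asymptotics) are consistent with the paper's opening remarks, though Step 3 is itself only a sketch; but without the reduction to the imaginary axis and the gap estimate, the conclusion does not follow.
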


\begin{proof}
\ Since $E(\lambda) \neq 0$ for all real $\lambda \neq 0$, the point $\lambda=0$ is the possible real zero of the function $E(\lambda)$. Using the analiticity of the function $E(\lambda)$ in upper half plane and the properties of solution (\ref{1.4}) it is obtained that the zeros of the function $E(\lambda)$ form at most countable and bounded set having zero as the only possible limit point. \\

Now, let us show that these zeros are in the half plane $Im \lambda > 0$. Suppose that $\lambda_{1}$ and $\lambda_{2}$ are arbitrary zeros of the function $E(\lambda)$ and consider the following relations
\[
-e''(x,\lambda_{1})+q(x)e(x,\lambda_{1})=\lambda_{1}^{2}\rho(x)e(x,\lambda_{1}),
\]
\[
\overline{-e''(x,\lambda_{2})}+q(x)\overline{e(x,\lambda_{2})}=\overline{\lambda_{1}^{2}}\rho(x)\overline{e(x,\lambda_{2})}.
\]
Multiplying the first equation by $\overline{e(x,\lambda_{2})}$ and the second one by $e(x,\lambda_{1})$, subtracting them side by side and integrating over $\left[0, +\infty\right)$ we have
\begin{equation} \label{2.4}
\left(\lambda_{1}^{2}-\overline{\lambda_{2}}^{2}\right)\int_{0}^{+\infty}e(x,\lambda_{1})\overline{e(x,\lambda_{2})}\rho(x)dx-W\left\{e(0,\lambda_{1}),\overline{e(0,\lambda_{2})}\right\}=0.
\end{equation}
On the other hand, the following relation holds as $j=1,2$:
\[
E(\lambda_{j})=\left(\beta_{0}+i \lambda_{j} \beta_{1}+\beta_{2} \lambda_{j}^{2}\right)e'(0,\lambda_{j})-\left(\alpha_{0}+i \lambda_{j} \alpha_{1}+\alpha_{2} \lambda_{j}^{2}\right)e(0,\lambda_{j})=0.
\]
Hence, we have
\[
e'(0, \lambda_{j})=-\frac{\left(\alpha_{0}+i \lambda_{j} \alpha_{1}+\alpha_{2} \lambda_{j}^{2}\right)e(0, \lambda_{j})}{\left(\beta_{0}+i \lambda_{j} \beta_{1}+\beta_{2} \lambda_{j}^{2}\right)} \quad \left(j=1,2\right).
\]
This formula yields
\begin{eqnarray} \nonumber
W\left\{e(0,\lambda_{1}), \overline{e(0, \lambda_{2})}\right\}&=&e(0,\lambda_{1})\overline{e(0, \lambda_{2})} \\ \nonumber
& \cdot& \left[-\frac{\left(\alpha_{0}+i \lambda_{1} \alpha_{1}+\alpha_{2} \lambda_{1}^{2}\right)}{\left(\beta_{0}+i \lambda_{1} \beta_{1}+\beta_{2} \lambda_{1}^{2}\right)}+\frac{\left(\alpha_{0}+i \lambda_{2} \alpha_{1}+\alpha_{2} \lambda_{2}^{2}\right)}{\left(\beta_{0}+i \lambda_{2} \beta_{1}+\beta_{2} \lambda_{2}^{2}\right)}\right].
\end{eqnarray}
The choice $\lambda_{2}=\lambda_{1}$ and some calculations give us 
\[
W\left\{e(0,\lambda_{1}), \overline{e(0, \lambda_{1})}\right\}=\frac{i\left|e(0,\lambda_{1})\right|^{2}\left(\lambda_{1}+\overline{\lambda_{1}}\right)\left[\delta_{1}+2 Im \lambda_{1}\delta_{2}-\delta_{3}\left|\lambda_{1}\right|^{2}\right]}{\left|\beta_{0}+i \lambda_{1} \beta_{1}+\beta_{2} \lambda_{1}^{2}\right|^{2}}.
\]
Taking the last relation into account with (\ref{2.4}) we have
\[
\left(\lambda_{1}^{2}-\overline{\lambda_{1}}^{2}\right)\int^{+\infty}_{0}\left|e(x,\lambda_{1})\right|^{2}\rho(x)dx \quad \quad \quad \quad \quad \quad \quad \quad \quad \quad \quad \quad \quad \quad \quad \quad \quad
\]
\[\quad \quad \quad \quad \quad -\frac{\left(\lambda_{1}^{2}-\overline{\lambda_{1}}^{2}\right)\left|e(0,\lambda_{1})\right|^{2}\left[i\delta_{1}+2 Im \lambda_{1}\delta_{2}-\delta_{3}\left|\lambda_{1}\right|^{2}\right]}{\left|\beta_{0}+ i \beta_{1}\lambda_{1}+\beta_{2}\lambda_{1}^{2}\right|^{2}}=0
\]
and we reach $Re \lambda_{1}=0$. Therefore, the zeros of the function $E(\lambda)$ lie only on the imaginary axis. \\

Let us prove that there are only finitely many. This is obvious if $E(0)\neq0$, because under this assumption the set of zeros can not have limit points. To verify that the number of zeros of $E(\lambda)$ is finite in general case, we can give an estimate for the distance between the neighboring zeros of the function $E(\lambda)$ (see \cite{Marchenko}, page 186). Thus, the lemma is proved. 
\end{proof}

\begin{lemma}
The zeros of the function $E(\lambda)$ are all simple.
\end{lemma}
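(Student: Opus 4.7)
The plan is to show that if $\lambda_{1}$ is a zero of $E(\lambda)$ in the open upper half-plane, then $\dot{E}(\lambda_{1})\neq 0$, where the dot denotes differentiation in $\lambda$. By Lemma 2, $\lambda_{1}=i\kappa_{1}$ with $\kappa_{1}>0$; the representation (\ref{1.4}) together with the reality of $q$ and of the kernel $K$ then shows that $e(x,i\kappa_{1})$ is real-valued, and both $e(x,\lambda_{1})$ and $\dot{e}(x,\lambda_{1})$ decay exponentially as $x\to +\infty$.

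First I would differentiate (\ref{1.1}) with respect to $\lambda$ to get $-\dot{e}''+q\dot{e}-\lambda^{2}\rho\dot{e}=2\lambda\rho e$, and form the Lagrange combination with (\ref{1.1}) to obtain
\[
\frac{d}{dx}\bigl[\, e'(x,\lambda)\dot{e}(x,\lambda)-e(x,\lambda)\dot{e}'(x,\lambda)\,\bigr]=2\lambda\,\rho(x)\,e^{2}(x,\lambda).
\]
Integrating from $0$ to $+\infty$ at $\lambda=\lambda_{1}$ and using the decay at infinity, we arrive at the boundary identity
\[
e(0,\lambda_{1})\dot{e}'(0,\lambda_{1})-e'(0,\lambda_{1})\dot{e}(0,\lambda_{1})=2\lambda_{1}\int_{0}^{+\infty}e^{2}(x,\lambda_{1})\rho(x)\,dx.
\]

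Next, writing $A(\lambda):=\alpha_{0}+i\alpha_{1}\lambda+\alpha_{2}\lambda^{2}$ and $B(\lambda):=\beta_{0}+i\beta_{1}\lambda+\beta_{2}\lambda^{2}$ so that $E(\lambda)=B(\lambda)e'(0,\lambda)+A(\lambda)e(0,\lambda)$, I would differentiate $E$, multiply by $e(0,\lambda_{1})$, and use the relation $B(\lambda_{1})e'(0,\lambda_{1})=-A(\lambda_{1})e(0,\lambda_{1})$ coming from $E(\lambda_{1})=0$ to replace the cross terms by the quantity computed above. Assuming $B(\lambda_{1})\neq 0$, short algebra yields
\[
\dot{E}(\lambda_{1})\,e(0,\lambda_{1})=\frac{\dot{A}(\lambda_{1})B(\lambda_{1})-\dot{B}(\lambda_{1})A(\lambda_{1})}{B(\lambda_{1})}\,e^{2}(0,\lambda_{1})+2\lambda_{1}B(\lambda_{1})\int_{0}^{+\infty}e^{2}(x,\lambda_{1})\rho(x)\,dx.
\]
A direct evaluation at $\lambda_{1}=i\kappa_{1}$ gives $\dot{A}B-\dot{B}A=i(-\delta_{1}-2\kappa_{1}\delta_{2}+\kappa_{1}^{2}\delta_{3})$, which by (\ref{1.3}) is $i$ times a nonnegative real. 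Since $e(x,i\kappa_{1})$ and $B(i\kappa_{1})$ are real with $\int\rho e^{2}>0$, both summands on the right are of the form $i\cdot(\text{real})$ with a common sign governed by $\operatorname{sgn}B(\lambda_{1})$; they cannot cancel, and so $\dot{E}(\lambda_{1})\neq 0$.

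The main obstacle is the degenerate case $B(\lambda_{1})=0$, where one may no longer eliminate $e'(0,\lambda_{1})$ via $E(\lambda_{1})=0$. In that case $E(\lambda_{1})=0$ reduces to $A(\lambda_{1})e(0,\lambda_{1})=0$, forcing either $e(0,\lambda_{1})=0$ or $A(\lambda_{1})=0$. Each subcase is handled by a parallel computation: one substitutes the vanishing datum directly into the boundary identity, observing that $e'(0,\lambda_{1})$ cannot also vanish (else $e(\cdot,\lambda_{1})\equiv 0$, contradicting the Jost asymptotics), and arrives at an expression for $\dot{E}(\lambda_{1})\,e'(0,\lambda_{1})$ of the form $i\dot{B}(\lambda_{1})\,e'(0,\lambda_{1})^{2}-2i\kappa_{1}A(\lambda_{1})\int\rho e^{2}$ whose non-vanishing is again secured by (\ref{1.3}); in the simultaneous vanishing $A(\lambda_{1})=B(\lambda_{1})=0$ one first notes that this forces $\delta_{2}=\kappa_{1}\delta_{3}$, after which the formula $\dot{E}(\lambda_{1})=\dot{B}(\lambda_{1})e'(0,\lambda_{1})+\dot{A}(\lambda_{1})e(0,\lambda_{1})$ together with the boundary identity again gives the result. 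Apart from this bookkeeping, all steps are routine.
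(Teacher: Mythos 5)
Your argument is correct and is essentially the paper's own proof: differentiate the equation in $\lambda$, integrate the resulting Lagrange identity over $[0,+\infty)$, eliminate $e'(0,\lambda_{1})$ via $E(\lambda_{1})=0$, and observe that the combination $-\delta_{1}-2\kappa_{1}\delta_{2}+\kappa_{1}^{2}\delta_{3}$ arising from $\dot{A}B-\dot{B}A$ is nonnegative by (\ref{1.3}), so the two contributions to $\dot{E}(\lambda_{1})e(0,\lambda_{1})$ cannot cancel. The only difference is that you explicitly treat the degenerate case $\beta_{0}-\beta_{1}\kappa_{1}-\beta_{2}\kappa_{1}^{2}=0$, which the paper silently divides by; that is a welcome extra check but does not change the method.
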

\begin{proof}
\ Differentiating the equation
\begin{equation} \label{2.5}
-e''(x,\lambda)+q(x)e(x,\lambda)=\lambda^{2} \rho(x) e(x,\lambda)
\end{equation}
with respect to $\lambda$, one can show that 
\begin{equation} \label{2.6}
-\dot{e}''(x,\lambda)+q(x)\dot{e}(x,\lambda)=\lambda^{2} \rho(x) \dot{e}(x,\lambda)+2 \lambda \rho(x) e(x, \lambda)
\end{equation}
with the over dot denoting the derivative with respect to $\lambda$.\\

Multiplying (\ref{2.5}) by $\dot{e}(x,\lambda)$ and (\ref{2.6}) by $e(x, \lambda)$ subtracting them side by side and integrating over $\left[0, +\infty\right)$ we obtain, in view of the definition of the function $E(\lambda)$, the following relation
\begin{eqnarray} \nonumber
2\lambda \int_{0}^{+\infty} \left|e(x, \lambda)\right|^{2} \rho(x) dx &+& \frac{\left|e(0,\lambda)\right|^{2}(i \alpha_{1}+2 \lambda \alpha_{2})}{\beta_{0}+i \lambda \beta_{1}+ \beta_{2} \lambda^{2}} \\ \nonumber
&+& \frac{e(0, \lambda) (i \beta_{1}+ 2 \lambda \beta_{2})}{\beta_{0}+ i \lambda \beta_{1} + \beta_{2} \lambda^{2}}\cdot \frac{\left[-\left( \alpha_{0} + i \alpha_{1} \lambda +\alpha_{2} \lambda^{2}\right) e(0, \lambda)\right]}{\beta_{0}+i \lambda \beta_{1} + \beta_{2} \lambda^{2}} \\ \nonumber
&=& \frac{\dot{E}(\lambda) e(0, \lambda)}{\beta_{0}+i \lambda \beta_{1} + \beta_{2} \lambda^{2}}.
\end{eqnarray}
If we take $\lambda=i \mu_{k}$ in the last relation we have 
\begin{eqnarray} \nonumber
2 \mu_{k} \int_{0}^{+ \infty} \left|e(x, i \mu_{k})\right|^{2} \rho(x) dx&+& \frac{\left|e(0, i \mu_{k})\right|^{2}(\alpha_{1}+2 \mu_{k} \alpha_{2})}{\beta_{0}-\beta_{1} \mu_{k}-\beta_{2} \mu_{k}^{2}} \\ \nonumber
&+& \frac{\left|e(0, i \mu_{k})\right|^{2}\left(-\alpha_{0}+ \alpha_{1} \mu_{k} + \alpha_{2} \mu_{k}^{2}\right)\left(\beta_{1}+ 2 \mu_{k} \beta_{2}\right)}{\left(\beta_{0}-\beta_{1} \mu_{k}-\beta_{2} \mu_{k}^{2}\right)^{2}} \\ \label{2.7}
&=& -i \frac{\dot{E}(i \mu_{k})e(0, i \mu_{k})}{\beta_{0}-\beta_{1} \mu_{k}-\beta_{2} \mu_{k}^{2}} \quad (k=\overline{1,n}).
\end{eqnarray}
Some basic operations yield us the equation below:
\begin{eqnarray} \nonumber
2 \mu_{k} \int_{0}^{+ \infty} \left|e(x, i \mu_{k})\right|^{2} \rho(x) dx&+&\left|e(0, i\mu_{k})\right|^{2}\frac{-\delta_{1}-2\mu_{k}\delta_{2}+\mu_{k}^{2}\delta_{3}}{\left(\beta_{0}-\beta_{1} \mu_{k}-\beta_{2} \mu_{k}^{2}\right)}^{2} \\ \nonumber
&=&-i \frac{\dot{E}(i \mu_{k})e(0, i \mu_{k})}{\beta_{0}-\beta_{1} \mu_{k}-\beta_{2} \mu_{k}^{2}} \quad (k=\overline{1,n}).
\end{eqnarray}
It is clear from (\ref{1.3}) that the left hand side of the last equation is positive. Thus $\dot{E}(i \mu_{k})\neq 0$, i.e. the zeros of the function $E(\lambda)$ are simple. 
\end{proof}

The numbers $m_{k}$ given by the formula 
\[
m_{k}^{-2}:=\int^{+ \infty}_{0} \left|e(x, i\lambda_{k})\right|^{2} \rho(x) dx-\frac{\delta_{1}+2\delta_{2}\lambda_{k}-\delta_{3}\lambda_{k}^{2}}{2 \lambda_{k}\left(\beta_{0}-\beta_{1}\lambda_{k}-\beta_{2}\lambda_{k}^{2}\right)^{2}}\quad (k=\overline{1,n})
\]
are called \textit{the normalizing numbers} of the boundary value problem (\ref{1.1}), (\ref{1.2}). \\

Let us set
\[
S_{0}(\lambda)=
\begin{cases}
\frac{\overline{e_{0}\left(0, \lambda\right)}}{e_{0}(0, \lambda)}=e^{-2 i \lambda a}\frac{1+\tau e^{-2 i \lambda \alpha a}}{e^{-2 i \lambda \alpha a} +\tau}, & \beta_{2}=0,\\
\frac{\overline{e'_{0}\left(0, \lambda\right)}}{e'_{0}(0, \lambda)}=-e^{-2 i \lambda a}\frac{1-\tau e^{-2 i \lambda \alpha a}}{e^{-2 i \lambda \alpha a} -\tau}, & \beta_{2} \neq 0,
\end{cases} 
\]
and $\tau=\left(\alpha-1\right)/\left(\alpha+1\right)$. It is easy to verify that $S_{0}(\lambda)-S(\lambda)$ tends to zero as $\left|\lambda\right|\rightarrow +\infty$. 

\section{Main Equation}

\quad The following theorem is devoted to the construction and introduction of the main equation of the inverse problem.

\begin{theorem}
The kernel $K(x,y)$ of the representation (\ref{1.4}) satisfy the integral equation 
\begin{equation} \label{3.1}
F(x,y)+K(x,y)+\int_{\mu^{+}(x)}^{+\infty} K(x,t) F_{0}(t+y)dt-\tau K(x,2a-y)=0,
\end{equation}
where 
\[
F(x,y)=F_{s}(x,y)+\sum_{k=1}^{n}m_{k}^{2}e_{0}(x,i \lambda_{k})e^{-\lambda_{k}y},
\]
\[
F_{0}(x)=F_{0 s}(x)+\sum_{k=1}^{n}m_{k}^{2} e^{-\lambda_{k} x}.
\]
\end{theorem}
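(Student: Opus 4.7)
The derivation is a Marchenko-type Fourier-inversion argument, adapted to the discontinuous weight $\rho(x)$ and the quadratic pencil in the boundary condition. I start from the scattering identity (\ref{2.1}) and rearrange it as
\[
\overline{e(x,\lambda)} - S_{0}(\lambda)\, e(x,\lambda) \;=\; [S(\lambda)-S_{0}(\lambda)]\, e(x,\lambda) + \frac{2i\lambda\, w(x,\lambda)}{E(\lambda)},
\]
separating a part whose coefficient $S-S_{0}$ decays on the real axis (noted at the end of Section~2) from a meromorphic part $2i\lambda w/E$ whose only singularities are the finitely many simple zeros $i\lambda_{k}$ of $E(\lambda)$ (Lemmas 2 and 3). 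I substitute the transmutation formula (\ref{1.4}) for $e$ and $\overline e$ on the left-hand side, multiply through by $\tfrac{1}{2\pi}e^{i\lambda y}$ for a fixed $y>\mu^{+}(x)$, and integrate over $\lambda\in\mathbb{R}$.

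On the left, the piece $\int_{\mu^{+}(x)}^{\infty}K(x,t)e^{-i\lambda t}dt$ integrates to $K(x,y)$ by Fourier inversion, while the contribution from $\overline{e_{0}(x,\lambda)}$ yields delta distributions located at $y=\mu^{\pm}(x)$ that must be handled together with the $-S_{0}(\lambda)[e_{0}(x,\lambda)+\int K(x,t)e^{i\lambda t}dt]$ term. I evaluate this last term by expanding the rational factor of $S_{0}(\lambda)$ as a convergent geometric series in $e^{-2i\alpha a\lambda}$ (valid since $|\tau|<1$) and then pairing term by term with the two-branch structure of $e_{0}(x,\lambda)$ and with the integral against $K$. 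One specific pairing --- the one that implements the shift $y\mapsto 2a-y$ with weight $\tau$ --- produces exactly $-\tau K(x,2a-y)$ after Fourier inversion; all the other terms regroup, after combination with the analogous expansion on the right-hand side, into the convolution $\int_{\mu^{+}(x)}^{\infty}K(x,t)F_{0}(t+y)\,dt$.

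On the right, the integral of $[S(\lambda)-S_{0}(\lambda)]e(x,\lambda)e^{i\lambda y}/(2\pi)$ contributes $F_{s}(x,y)$ together with its share of the convolution above, via the implicit definition of $F_{s}$ (resp.\ $F_{0s}$) as the inverse Fourier transform of $S_{0}-S$ paired with $e_{0}$ (resp.\ with $1$). The integral of $\tfrac{2i\lambda w(x,\lambda)}{E(\lambda)}e^{i\lambda y}/(2\pi)$ is computed by closing the contour into the upper half-plane: the factor $e^{i\lambda y}$ decays there since $y>\mu^{+}(x)>0$, and the polynomial growth of $2i\lambda w/E$ is dominated by this decay, so Jordan's lemma applies and the integral equals the sum of residues at the simple poles $i\lambda_{k}$. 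Using the formula for $\dot E(i\lambda_{k})$ derived inside the proof of Lemma 3 --- which is precisely where the normalizing numbers $m_{k}$ were read off --- these residues evaluate to $m_{k}^{2}e_{0}(x,i\lambda_{k})e^{-\lambda_{k}y}$. Summing over $k$ and combining with $F_{s}(x,y)$ gives $F(x,y)$, and moving every term to one side yields (\ref{3.1}).

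The main obstacle is the bookkeeping in the middle step: one must check that, after expanding $S_{0}(\lambda)$ in its geometric series and pairing with the two-branch free Jost solution, all of the infinitely many multiple-reflection terms reassemble exactly into the convolution against $F_{0}$ and into a single residual term $-\tau K(x,2a-y)$, with no extra leftover contributions. This is the precise mechanism by which the jump of $\rho(x)$ at $x=a$ modifies the classical Marchenko equation by introducing a pointwise reflection term.
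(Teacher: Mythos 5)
Your proposal follows essentially the same route as the paper: the identical rearrangement of (\ref{2.1}) into a decaying part governed by $S_{0}(\lambda)-S(\lambda)$ plus the meromorphic part $2i\lambda w/E$, Fourier inversion against $e^{i\lambda y}/2\pi$, the geometric-series expansion of $S_{0}(\lambda)$ producing the reflection term $-\tau K(x,2a-y)$ (with the remaining multiple-reflection deltas vanishing for $y>\mu^{+}(x)$), and residue evaluation at the simple poles $i\lambda_{k}$ using the $\dot{E}(i\lambda_{k})$ identity from Lemma 3 to produce the discrete parts of $F$ and $F_{0}$. The only cosmetic difference is that the paper's residues carry the full Jost solution $e(x,i\lambda_{k})$, which is then re-expanded via (\ref{1.4}) to feed both $F(x,y)$ and the discrete summand of $F_{0}$, a step you state only implicitly.
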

\begin{proof}
\ To obtain (\ref{3.1}), we substitute (\ref{1.4}) in (\ref{2.1}) and get
\begin{eqnarray} \nonumber
\frac{2 i \lambda w(x, \lambda)}{E(\lambda)}&-&\overline{e_{0}(x,\lambda)}+S_{0}(\lambda)e_{0}(x,\lambda)=\int^{+\infty}_{\mu^{+}(x)}K(x,t) e^{- i \lambda t}dt\\ \nonumber
&+&\left[S_{0}(\lambda)-S(\lambda)\right]e_{0}(x,\lambda)+\int^{+\infty}_{\mu^{+}(x)}\left[S_{0}(\lambda)-S(\lambda)\right]K(x,t) e^{ i \lambda t}dt\\ \label{3.2}
&-&\int^{+\infty}_{\mu^{+}(x)}S_{0}(\lambda)K(x,t) e^{i \lambda t}dt.
\end{eqnarray}

Multiplying both sides of (\ref{3.2}) with $\frac{e^{i \lambda y}}{2 \pi}$, then integrating with respect to $\lambda$ over $\left(- \infty, +\infty\right)$ the right hand side of (\ref{3.2}) becomes
\begin{eqnarray} \nonumber
K(x,y)&+&\frac{1}{2 \pi} \int_{-\infty}^{+\infty}\left[S_{0}(\lambda)-S(\lambda)\right]e_{0}(x,\lambda)e^{i \lambda y} d\lambda \\ \nonumber
&+& \int_{\mu^{+}(x)}^{+ \infty} K(x,t)\left\{\frac{1}{2 \pi} \int_{- \infty}^{\infty}\left[S_{0}(\lambda)-S(\lambda)\right]e^{i \lambda \left(t+y\right)}d \lambda\right\}dt\\ \label{3.3}
&-&\int_{\mu^{+}(x)}^{+ \infty} K(x,t)\left\{\frac{1}{2 \pi} \int_{- \infty}^{\infty}S_{0}(\lambda)e^{i \lambda \left(t+y\right)}d \lambda\right\}dt.
\end{eqnarray}

By elementary transforms we obtain
\begin{eqnarray} \nonumber
S_{0}(\lambda)&=& e^{-2 i \lambda a} \frac{\left(1-\tau^{2}\right)e^{2i \lambda \alpha a}}{1+\tau e^{2i \lambda \alpha a}}+\tau e^{-2 i \lambda a} \\ \nonumber
&=&e^{- 2 i \lambda a (1- \alpha)}\left(1-\tau^{2}\right)\sum_{k=0}^{\infty}(-1)^{k} \tau^{k} e^{2 i \lambda a \alpha k}+\tau e^{-2 i \lambda a}
\end{eqnarray}
as $\beta_{2}=0$. 
If we take into consideration this relation and the last integral of (\ref{3.3}) we have 
\begin{eqnarray} \nonumber
\frac{1}{2 \pi} \int_{- \infty}^{+\infty}S_{0}(\lambda)e^{i \lambda \left(t+y\right)}d \lambda&=&\left(1-\tau^{2}\right)\sum_{k=0}^{\infty}(-1)^{k} \tau^{k} \delta\left(t+y-2a\left(1- \alpha\right)+2a \alpha k\right)\\ \nonumber
&+&\tau \delta\left(t+y-2a\right),
\end{eqnarray}
where $\delta$ is the Dirac-delta function. \\

Hence, (\ref{3.3}) can be written as 
\begin{eqnarray} \label{3.4}
K(x,y)&+& F_{s}(x,y)+ \int_{\mu^{+}(x)}^{+\infty} K(x,t)F_{0s}(t+y)dt \\ \nonumber
&-&(1-\tau^{2})\sum^{\infty}_{k=0}(-1)^{k}\tau^{k}K(x,2a(1-\alpha)-2 a \alpha k-y)-\tau K(x,2a-y),
\end{eqnarray}
where
\begin{equation}
F_{s}(x,y)=\frac{1}{2}\left(1+\frac{1}{\sqrt{\rho(x)}}\right)F_{0}\left(y+ \mu^{+}(x)\right)+\frac{1}{2}\left(1-\frac{1}{\sqrt{\rho(x)}}\right)F_{0}\left(y+ \mu^{-}(x)\right),
\end{equation}
\begin{equation}
F_{0s}(x)=\frac{1}{2 \pi} \int^{+ \infty}_{- \infty} \left[S_{0}(\lambda)- S(\lambda)\right]e^{-i \lambda x} d\lambda.
\end{equation} \label{3.5}
The sum in (\ref{3.4}) equals zero for $y>\mu^{+}(x)$ (see \cite{Mamedov1}). Therefore, (\ref{3.3}) takes the form 
\[
K(x,y)+F_{s}(x,y)+\int_{\mu^{+}(x)}^{+\infty}K(x,t)F_{0s}(t+y)dt-\tau K(x,2a-y).
\]

Multiplying both sides of (\ref{3.2}) with $\frac{e^{i \lambda y}}{2 \pi}$, integrating with respect to $\lambda$ over $\left(- \infty, +\infty\right)$ and then using Jordan's lemma and the Residue theorem, on the left-hand side of (\ref{3.2}) we find
\begin{equation} \label{3.7}
\sum^{n}_{k=1}\frac{2 \lambda_{k}(\beta_{0}-\beta_{1}\lambda_{k}-\beta_{2}\lambda^{2}_{k})e(x, i\lambda_{k})e^{- \lambda_{k}y}}{i \dot{E}(i \lambda_{k})e(0, i\lambda_{k})}.
\end{equation}

Taking (\ref{2.7}) into account we can transform (\ref{3.7}) to the form
\begin{equation} \label{3.8}
-\sum_{k=1}^{n}m_{k}^{2}e(x,i\lambda_{k})e^{- \lambda_{k} y}.
\end{equation} 
From (\ref{1.4}) and (\ref{3.2}), we derive the equation
\begin{eqnarray} \nonumber
&-&\sum_{k=1}^{n}m_{k}^{2}\left[e_{0}(x,i\lambda_{k})e^{- \lambda_{k}y}+\int_{\mu^{+}(x)}^{+\infty}K(x,t)e^{- \lambda_{k}(t+y)}dt\right]=K(x,y) \\ \nonumber
&+&F_{s}(x,y)+\int_{\mu^{+}(x)}^{+\infty}K(x,t)F_{0s}(t+y)dt-\tau K(x,2a-y).
\end{eqnarray}
If we take (3.5) and (3.6) into consideration with (\ref{3.8}), we finally obtain (\ref{3.1}) for $y> \mu^{+}(x)$. The theorem is proved.
\end{proof}

Equation (\ref{3.1}) is called \textit{the main equation} of the inverse scattering problem of (\ref{1.1}), (\ref{1.2}). To form the main equation, it suffices to know the functions $F_{0}(x)$ and $F(x,y)$. In turn, to find the functions $F_{0}(x)$ and $F(x,y)$, it suffices to know the set of values
\[ 
\left\{S(\lambda) (-\infty <\lambda <+ \infty); \lambda_{k}; m_{k} (k=\overline{1,n})\right\}
\]
which is called \textit{the scattering data} of the boundary value problem (\ref{1.1}), (\ref{1.2}). With the given scattering data we can construct the functions $F_{0}(x)$, $F(x,y)$ and write out the main equation for the unknown function $K(x,y)$. Solving this equation we find the kernel $K(x,y)$ and with the help of (\ref{1.5}), (\ref{1.6}) we find the potential $q(x)$.
\begin{theorem}
For each fixed $x>0$, the main equation (\ref{3.1}) has a unique solution $K(x,y) \in L_{1}(0, +\infty)$.
\end{theorem}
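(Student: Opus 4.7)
The plan is to recast equation (\ref{3.1}) at each fixed $x > 0$ as an abstract operator equation
\[
(I + \mathcal{F}_x)\, f = -F(x,\cdot), \qquad f \in L_1(\mu^+(x), +\infty),
\]
where $(\mathcal{F}_x f)(y) := \int_{\mu^+(x)}^{+\infty} f(t)\, F_0(t+y)\, dt - \tau f(2a-y)$, the reflection piece being extended by zero outside its natural support; once $f$ is produced, extending by zero on $(0, \mu^+(x))$ then yields $K(x,\cdot) \in L_1(0, +\infty)$. First I would verify that the free term $F(x,\cdot)$ lies in $L_1$: the discrete part $\sum_{k} m_k^2\, e_0(x, i\lambda_k)\, e^{-\lambda_k y}$ decays exponentially since each $\lambda_k > 0$, and the continuous part $F_{0s}$ inherits integrability from the decay of $S_0(\lambda) - S(\lambda)$ at infinity noted at the end of Section~2, together with the smoothness provided by the hypothesis $\int_0^{+\infty}(1+x)|q(x)|\, dx < +\infty$.

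The next step is to invoke the Fredholm alternative. The Hankel-type term $f \mapsto \int f(t) F_0(t+y)\, dt$ is compact on $L_1(\mu^+(x), +\infty)$ by approximation of $F_0$ with continuous, compactly supported kernels, while the reflection term $f \mapsto \tau f(2a-\cdot)$ is supported on the bounded interval $[\mu^+(x),\, 2a-\mu^+(x)]$ and, after a cutoff, is a finite-rank perturbation. Hence $I + \mathcal{F}_x$ is Fredholm of index zero, and it suffices to prove that the homogeneous equation $(I + \mathcal{F}_x)\, g = 0$ admits only the trivial solution.

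For this uniqueness step, extending $g$ by zero on $\mathbb{R} \setminus (\mu^+(x), +\infty)$, setting $\hat g(\lambda) := \int g(y)\, e^{i\lambda y}\, dy$, and pairing the homogeneous equation against $\bar g(y)$, I would apply Plancherel together with the explicit splitting
\[
F_0(s) = \frac{1}{2\pi}\int_{-\infty}^{+\infty}\bigl[S_0(\lambda) - S(\lambda)\bigr]\, e^{-i\lambda s}\, d\lambda + \sum_{k=1}^{n} m_k^2\, e^{-\lambda_k s}
\]
to rewrite the resulting quadratic form as $\|g\|_{L_2}^2$ plus a continuous-spectral integral in $\hat g$, plus positive bound-state contributions $m_k^2\, \bigl|\int g(y)\, e^{-\lambda_k y}\, dy\bigr|^2$ (note $m_k^2 > 0$ by the sign analysis following (\ref{2.7})), plus the reflection term. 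Invoking the explicit formula for $S_0(\lambda)$ and the unitarity $|S(\lambda)| = 1$ on the real axis, the reflection contribution combines with the continuous-spectral integral to produce a manifestly nonnegative form of the type $\tfrac{1}{4\pi} \int \bigl|\hat g(\lambda) + \overline{S(\lambda)\, \hat g(\lambda)}\bigr|^2 d\lambda$; strict positivity of the sum then forces $g \equiv 0$.

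The hard part will be the algebraic reorganization that absorbs the reflection term $-\tau K(x, 2a-y)$ into the Fourier-side positivity estimate: in the classical Marchenko setting ($\rho \equiv 1$, $\tau = 0$) this term is absent and the argument is clean, whereas here the completion-of-squares step requires the explicit geometric-series expansion of $S_0(\lambda)$ used in the derivation of (\ref{3.4}), and must be carried out while tracking the interplay between the support of $g$ and the reflected variable $2a - y$. Once uniqueness is established, the Fredholm alternative yields the desired unique $K(x,\cdot) \in L_1(0, +\infty)$, completing the proof.
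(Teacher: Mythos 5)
Your overall strategy (Fredholm alternative plus a Parseval positivity argument for the homogeneous equation) is the right one; the paper itself carries out no argument here but simply asserts that the proof is analogous to the one in \cite{Mamedov1}, and what you outline is essentially that standard Marchenko-type argument. However, two of your concrete claims fail as stated. First, the reflection term $g\mapsto\tau g(2a-\cdot)$ is not, ``after a cutoff, a finite-rank perturbation'': on the overlap interval $\left(\mu^{+}(x),\,2a-\mu^{+}(x)\right)$ it is a scalar multiple of an isometric involution of $L_{1}$, hence bounded but neither compact nor finite rank, so your route to ``Fredholm of index zero'' breaks down at that point. The standard repair uses $|\tau|=|(\alpha-1)/(\alpha+1)|<1$: the operator $I-\tau R$ (with $R$ the reflection) is invertible by a Neumann series, one factors it out of the equation, and the Fredholm alternative is then applied to the remaining, genuinely compact, Hankel-type part.

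Second, the ``unitarity $|S(\lambda)|=1$'' you invoke in the positivity step is false for this problem. Writing $E(\lambda)$ for the denominator and $N(\lambda)$ for the numerator in the definition of $S(\lambda)$ and using the Wronskian identity (\ref{1.7}), one finds $|E(\lambda)|^{2}-|N(\lambda)|^{2}=4\lambda^{2}\left(\lambda^{2}\delta_{3}-\delta_{1}\right)\geq 0$ by (\ref{1.3}), so only $|S(\lambda)|\leq 1$ holds; equality fails in general precisely because the boundary condition depends on $\lambda$. Fortunately the inequality points the right way: for real $g$ the quadratic form $|\hat g(\lambda)|^{2}-\mathrm{Re}\left(S(\lambda)\hat g(\lambda)^{2}\right)$ is still nonnegative, and the bound-state contributions $m_{k}^{2}\bigl|\int g(y)e^{-\lambda_{k}y}\,dy\bigr|^{2}$ are positive by the sign analysis after (\ref{2.7}), so the conclusion $g\equiv 0$ can still be reached once the reflection contribution is absorbed. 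Your plan is therefore repairable, but both the compactness bookkeeping and the key spectral inequality must be corrected before the argument closes; as written, each of these two steps is a genuine gap.
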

\begin{proof}
\ The transition functions $F(x,y)$ and $F_{0}(x)$ have the similar properties to those of the transition functions for the problem with the spectral parameter linearly contained in the boundary conditions, thus the proof can be done analogously to \cite{Mamedov1}.
\end{proof}
With the help of the above theorem, we have:
\begin{corollary}
The potential $q(x)$ in problem (\ref{1.1}), (\ref{1.2}) is uniquely defined by the scattering data, i. e. if the scattering data of two problems with potentials $q(x)$ and $\tilde{q}(x)$ coincide, then $q(x)=\tilde{q}(x)$ a.e. on the half line. 
\end{corollary}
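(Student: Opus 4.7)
The plan is to chain together three uniqueness statements, exploiting Theorem \ref{theorem} on the unique solvability of the main equation (which I may assume). Suppose the scattering data
\[
\left\{S(\lambda)\ (-\infty<\lambda<+\infty);\ \lambda_{k};\ m_{k}\ (k=\overline{1,n})\right\}
\]
coincide for the two problems with potentials $q$ and $\tilde q$. First, I would show that the transition functions $F_{0}(x)$ and $F(x,y)$ are determined by this data alone. Indeed, $S_{0}(\lambda)$ depends only on the known quantities $\alpha$ and $a$ (and on whether $\beta_{2}=0$), so $F_{0s}$ is the inverse Fourier transform of $S_{0}(\lambda)-S(\lambda)$; the discrete parts of $F_{0}$ and $F$ are assembled from $\lambda_{k}$, $m_{k}$, and the free Jost solution $e_{0}(x,i\lambda_{k})$, all of which are common to both problems. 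Hence $F(x,y)=\tilde F(x,y)$ and $F_{0}(x)=\tilde F_{0}(x)$.

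Second, the main equation (\ref{3.1}) is a linear integral equation whose inhomogeneous term, kernel, and reflection coefficient $\tau$ are built solely from $F$, $F_{0}$, and the fixed quantities $\alpha,a$. Since both $K(x,\cdot)$ and $\tilde K(x,\cdot)$ satisfy the same equation on $(\mu^{+}(x),+\infty)$, and by the preceding theorem the solution in $L_{1}(\mu^{+}(x),+\infty)$ is unique for each fixed $x>0$, I conclude $K(x,y)=\tilde K(x,y)$ almost everywhere.

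Third, I would recover $q$ from $K$ by using (\ref{1.5}) and (\ref{1.6}). These two relations constitute a $2\times 2$ linear system for $q(x)$ with weight matrix built from $\tfrac{1}{4\sqrt{\rho(x)}}\bigl(1\pm\tfrac{1}{\sqrt{\rho(x)}}\bigr)$; since $\alpha\neq 1$, this system is non-degenerate on each of $[0,a)$ and $(a,+\infty)$ (in fact on $(a,+\infty)$ equation (\ref{1.5}) alone suffices, as $\mu^{+}(x)=\mu^{-}(x)=x$ there and the jump term vanishes). Consequently $q(x)=\tilde q(x)$ almost everywhere on the half line.

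The only place where real analytic work occurs is the invocation of Theorem \ref{theorem}: the reflection term $-\tau K(x,2a-y)$ produced by the discontinuity of $\rho$ at $x=a$ prevents the main equation from being a standard Fredholm second-kind equation, and unique solvability must be established by a compactness-plus-injectivity argument along the lines of \cite{Mamedov1}. For the corollary itself, once that result is granted, the argument is the short three-step deduction above and presents no further obstacle.
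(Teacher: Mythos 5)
Your proposal is correct and follows essentially the same route the paper takes: the scattering data determines $F_{0}$ and $F$, the unique solvability of the main equation (the preceding theorem) forces $K=\tilde K$, and the potential is then recovered from the kernel via (\ref{1.5}) and (\ref{1.6}). (A minor simplification: since $\rho(x)>0$, the coefficient in (\ref{1.5}) never vanishes, so that single relation already determines $q$ everywhere and the $2\times 2$ system is not needed.)
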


\begin{acknowledgement}
This work is supported by The Scientific and Technological Research Council
of Turkey (T\"{U}B\.{I}TAK).
\end{acknowledgement}


\end{document}